\documentclass[a4paper]{article}
\usepackage{amsmath, amsthm, amssymb}
\usepackage[latin1]{inputenc}
\usepackage[T1]{fontenc}
\usepackage[english]{babel}     
\usepackage{color}
\usepackage{tikz}                      
\usepackage{pgf}
\usepackage{xkeyval}

\usepackage{multirow}           

\usepackage[dvips]{epsfig}
\usepackage{psfrag}
\usepackage{algorithm}
\usepackage{algorithmic}

\usepackage{listings}           
\usepackage{color}

\definecolor{dkgreen}{rgb}{0,0.6,0}
\definecolor{gray}{rgb}{0.5,0.5,0.5}
\definecolor{mauve}{rgb}{0.58,0,0.82}

\lstset{frame=tb,
  language=Python,
  aboveskip=3mm,
  belowskip=3mm,
  showstringspaces=false,
  columns=flexible,
  basicstyle={\small\ttfamily},
  numbers=none,
  numberstyle=\tiny\color{gray},
  keywordstyle=\color{blue},
  commentstyle=\color{dkgreen},
  stringstyle=\color{mauve},
  breaklines=true,
  breakatwhitespace=true
  tabsize=3
}



\newtheorem{Corollary}{Corollary}[section]
\newtheorem{Lemma}{Lemma}[section]
\newtheorem{Theorem}{Theorem}[section]
\newtheorem{Remark}{Remark}[section]

\newtheorem{Example}{Example}[section]
\numberwithin{equation}{section}
\def\dvg{{\rm div}}


\newcommand\be{\begin{eqnarray*}}
\newcommand\ee{\end{eqnarray*}}
\newcommand\ben{\begin{eqnarray}}
\newcommand\een{\end{eqnarray}}
\newcommand{\comment}[1]{}

\def\dvg{{\rm div}}

\def\IntO{\int\limits_\Omega}

\def\blow{\underline{c}}

\def\dx{\, \textrm{d}\mathbf{x}}

\def\maj{\overline{\mathfrak{M}}}

\def\H1o{{\stackrel{\circ}{H^1}}(\Omega)}   
\def\Hdiv{H(\dvg,\Omega)}                   

\title{Functional a posteriori error estimate for a nonsymmetric stationary diffusion problem}
\author{O. Mali}

\begin{document}

\maketitle

\begin{abstract}
In this paper, a posteriori error estimates of functional type for
a stationary diffusion problem with nonsymmetric coefficients are
derived. The estimate is guaranteed and does not depend on any
particular numerical method. An algorithm for the global
minimization of the error estimate with respect to the flux over
some finite dimensional subspace is presented. In numerical tests,
global minimization is done over the subspace generated by
Raviart-Thomas elements. The improvement of the error bound due to
the $p$-refinement of these spaces is investigated.
\end{abstract}

\section{Introduction}

In this paper, we derive a posteriori error estimates of the
functional type for a class of elliptic problems with nonsymmetric
coefficients. Since mid 90's (see \cite{Repin1997}), estimates of
this type has been derived for a wide range of problems (see,
e.g.,  monographs
\cite{NeittaanmakiRepin2004,Repin2008,MaliNeittaanmakiRepin2014}
and references there in). However, the case of a stationary
diffusion problem, where coefficients are not symmetric has not
been studied before. Problems of this type are not very typical
among other elliptic equations but they arise in certain models
(see, e.g., \cite{Denisenko1989,Denisenko1997}). It is shown that
the derived estimate has the standard properties of a deviation
estimate for a linear problem, i.e., it is guaranteed and
computable. The derivation of the estimate is based on the method
of integral identities and a special case of
Cauchy-Schwartz-Bunyakovsky inequality.


Consider the Poisson problem,
\begin{align}
\label{eq:diff:strong1}
- \dvg \mathbf{A} \nabla u = f & \quad \textrm{in} \; \Omega \subset \mathbb{R}^d\\
\label{eq:diff:strong2}
                 u = 0 & \quad \textrm{on} \; \Gamma ,
\end{align}
where $\Omega$ a is simply connected domain with a Lipschitz-continuous boundary,
$f \in L^2(\Omega)$, and $\mathbf{A} \in L_\infty(\Omega,\mathbb{R}^{d \times d})$
is strictly positive definite, bounded, and has a bounded inverse
$\mathbf{A}^{-1} \in \mathbb{R}^{d \times d}$ in $\Omega$. Moreover,
$\mathbf{A}$ is positive definite, i.e., there exists constant $\blow >0$ such that
\begin{equation} \label{eq:blow}
(\mathbf{A} \boldsymbol{\xi}, \boldsymbol{\xi})_{\mathbb{R}^d}
\geq \blow
\| \boldsymbol{\xi} \|_{\mathbb{R}^d}^2,
\quad \forall \boldsymbol{\xi} \in \mathbb{R}^d, \; \textrm{a.e. in } \Omega .
\end{equation}
The generalized solution
$u \in H_0^1(\Omega)$ satisfies the integral identity,
\begin{equation} \label{eq:diff:weak}
( \mathbf{A} \nabla u , \nabla w )_{L^2(\Omega,\mathbb{R}^d)}
=
( f , w )_{L^2(\Omega)}, \quad
\forall w \in H_0^1(\Omega) .
\end{equation}


\section{Error majorant}


For symmetric problems with
$\mathbf{A} \in L_\infty(\Omega,\mathbb{R}^{d \times d}_{\rm sym})$
the respective guaranteed upper bounds (error majorants) have been
presented in
\cite{NeittaanmakiRepin2004,Repin2008,MaliNeittaanmakiRepin2014}
and other publications cited therein. It has the form,
\begin{equation*} \label{eq:diff:maj}
\maj(v,\mathbf{y}) :=
(\mathbf{A}\nabla v - \mathbf{y},
\nabla v - \mathbf{A}^{-1} \mathbf{y})^{1/2}_{L^2(\Omega,\mathbb{R}^d)}
+ \frac{C_F}{\sqrt{\blow}} \| \dvg \mathbf{y} + f \|_{L^2(\Omega)} ,
\end{equation*}
where $v \in H_0^1(\Omega)$, $\mathbf{y} \in \Hdiv$, and $C_F$ is the constant in
Friedrichs inequality,
\begin{equation} \label{eq:Fri}
\| w \|_{L^2(\Omega)} \leq C_F \| \nabla w \|_{L^2(\Omega,\mathbb{R}^d)},
\quad \forall w \in H_0^1(\Omega).
\end{equation}

A special case of the Cauchy-Schwartz-Bunyakovsky inequality presented below is
required to obtain an analogous error estimate in the nonsymmetric case.
\begin{Lemma} \label{le:nonsym}
Let $\mathcal U$ be a Hilbert space which field is real numbers,
$A:\mathcal U \rightarrow \mathcal U$ is continuous, bounded,
strictly positive definite, and has a continuous inverse
$A^{-1}$. Moreover,
\begin{equation*} 
B := ( {\rm Id} + A^T A^{-1} )^{-1}
\end{equation*}
is continuous and bounded. Then,
\begin{equation} \label{eq:nonsym}
(y,q)_{\mathcal U} \leq
2
(A y,y)_{\mathcal U}^{1/2}
(A^{-1} B q, B q)_{\mathcal U}^{1/2} , \quad \forall y,q \in \mathcal U .
\end{equation}
\end{Lemma}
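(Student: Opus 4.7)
The plan is to reduce the inequality \eqref{eq:nonsym} to the classical Cauchy-Schwarz-Bunyakovsky inequality applied in the inner product generated by the symmetric part $S := \tfrac{1}{2}(A + A^T)$. Since $A$ is strictly positive definite, so is $S$, and $(Sv, v)_{\mathcal U} = (Av, v)_{\mathcal U}$ for every $v \in \mathcal U$ because any skew-symmetric bilinear form vanishes on the diagonal. This already identifies the first factor on the right-hand side of \eqref{eq:nonsym}.

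The key algebraic step uses the definition of $B$. From $B^{-1} = \mathrm{Id} + A^T A^{-1}$ one obtains
\[
q = B^{-1} B q = B q + A^T A^{-1} B q .
\]
Introducing the auxiliary vector $z := A^{-1} B q$ and noting that $B q = A z$, this identity reads $q = A z + A^T z$, so
\[
(y, q)_{\mathcal U} = (y, A z)_{\mathcal U} + (y, A^T z)_{\mathcal U}
= (A^T y, z)_{\mathcal U} + (A y, z)_{\mathcal U}
= 2 \, (S y, z)_{\mathcal U} .
\]

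Next I would apply the Cauchy-Schwarz-Bunyakovsky inequality in the inner product induced by $S$ to get
\[
2 \, (S y, z)_{\mathcal U} \leq 2 \, (S y, y)_{\mathcal U}^{1/2} \, (S z, z)_{\mathcal U}^{1/2} ,
\]
and then identify the two factors. The first one equals $(A y, y)_{\mathcal U}$ by the observation in the first paragraph. For the second, using once more $B q = A z$ together with the symmetry of $(\cdot,\cdot)_{\mathcal U}$,
\[
(S z, z)_{\mathcal U} = (A z, z)_{\mathcal U} = (B q, z)_{\mathcal U} = (A^{-1} B q, B q)_{\mathcal U} ,
\]
which, substituted above, yields \eqref{eq:nonsym}.

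The main conceptual obstacle is spotting that the somewhat cryptic definition of $B$ is engineered precisely so that the splitting $q = A z + A^T z$ makes the symmetric part of $A$ appear as $\tfrac{1}{2}B^{-1}A$; once that is observed no further non-trivial estimate remains, and the role of the hypotheses on $A$, $A^{-1}$, and $B$ is only to ensure that every quantity involved is well defined, bounded, and that the $S$-inner product is genuinely positive definite.
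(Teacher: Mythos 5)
Your proof is correct, and it is a genuinely cleaner repackaging of the same underlying mechanism as the paper's argument rather than a literal copy of it. The paper expands $0 \le (A(y-\gamma A^{-1}q), y-\gamma A^{-1}q)_{\mathcal U}$, chooses the optimal $\gamma$ to get $(y,(\mathrm{Id}+A^TA^{-1})q)_{\mathcal U}^2 \le 4\,(Ay,y)_{\mathcal U}\,(A^{-1}q,q)_{\mathcal U}$, and then replaces $q$ by $Bq$ so that $(\mathrm{Id}+A^TA^{-1})Bq=q$; in other words, it proves the needed Cauchy--Schwarz-type bound from scratch by the quadratic-discriminant trick. You instead introduce the symmetric part $S=\tfrac12(A+A^T)$ explicitly: your identity $q = Az + A^Tz$ with $z=A^{-1}Bq$ is exactly the paper's substitution read backwards, and your appeal to the standard CSB inequality in the $S$-inner product replaces the paper's $\gamma$-optimization, which is nothing but the textbook proof of that same inequality (indeed $(y,(\mathrm{Id}+A^TA^{-1})q)_{\mathcal U} = 2(Sy,A^{-1}q)_{\mathcal U}$ and $(A^{-1}q,q)_{\mathcal U} = (SA^{-1}q,A^{-1}q)_{\mathcal U}$). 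What your route buys is transparency: it explains the factor $2$, shows that both factors in \eqref{eq:nonsym} are genuine $S$-norms, and makes clear that $B$ is engineered so that $B^{-1}A = A + A^T$. What the paper's route buys is self-containedness: it never needs to introduce $S$ or verify that $(S\cdot,\cdot)_{\mathcal U}$ is an inner product (a point you correctly cover by noting that strict positive definiteness of $A$ forces the same for $S$), and it handles the degenerate cases $y=0$ or $q=0$ explicitly, which in your version are absorbed into the standard CSB statement.
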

\begin{proof}
Since $A$ is strictly positive definite,
\begin{multline*}
0 \leq ( A (y-\gamma A^{-1} q),
y-\gamma A^{-1} q)_{\mathcal U} \\
=
(A y,y)_{\mathcal U}
- \gamma (y, ({\rm Id}+ A^T A^{-1}) q )_{\mathcal U}
+
\gamma^2 ( A^{-1} q , q )_{\mathcal U} .
\end{multline*}
Selecting (assume $y \neq 0$ and $q \neq 0$,
otherwise (\ref{eq:nonsym}) holds trivially)
\[
\gamma = \frac{2 (A y,y)_{\mathcal U}}
{(y, ({\rm Id}+ A^T A^{-1}) q )_{\mathcal U}}
\]
yields
\[
(y, ({\rm Id}+ A^T A^{-1}) q )_{\mathcal U}^2
\leq
4
(A y,y)_{\mathcal U}
( A^{-1} q , q )_{\mathcal U},
\]
where setting $q = Bq = ({\rm Id}+ A^T A^{-1})^{-1} q$
leads at (\ref{eq:nonsym}).
\end{proof}

\begin{Theorem}
Let $v \in H^1_0(\Omega)$ and $u$ be the solution of (\ref{eq:diff:weak}), then,
\[
( \mathbf{A} \nabla (u-v), \nabla (u-v) )_{L^2(\Omega,\mathbb{R}^d)}^{1/2}
\leq
\maj(v,\mathbf{y}) , \quad \forall \mathbf{y} \in \Hdiv ,
\]
where
\begin{equation*} 
\maj(v,\mathbf{y}) :=
2 ( \mathbf{A}^{-1} \mathbf{B} (\mathbf{y} - \mathbf{A} \nabla v),
\mathbf{B} (\mathbf{y} - \mathbf{A} \nabla v) )_{L^2(\Omega,\mathbb{R}^d)}^{1/2}
+
\frac{C_F}{\sqrt{\blow}} \| \dvg \mathbf{y} + f \|_{L^2(\Omega)}
\end{equation*}
and
\[
\mathbf{B} := ( \mathbf{I} + \mathbf{A}^{T} \mathbf{A}^{-1} )^{-1}.
\]
The constants $C_{F}$ and $\blow$ are defined in (\ref{eq:Fri}) and
(\ref{eq:blow}), respectively.
\end{Theorem}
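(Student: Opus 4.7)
The plan is to derive an integral identity for the energy quantity $(\mathbf{A}\nabla e, \nabla e)_{L^2(\Omega,\mathbb{R}^d)}$, where $e := u - v$, then bound the two resulting terms separately, and finally cancel one power of $(\mathbf{A}\nabla e, \nabla e)^{1/2}$. First I would test the weak formulation (\ref{eq:diff:weak}) against $w = e \in H_0^1(\Omega)$ to obtain $(\mathbf{A}\nabla u, \nabla e)_{L^2} = (f, e)_{L^2}$, then subtract $(\mathbf{A}\nabla v, \nabla e)_{L^2}$ from both sides, which gives
\[
(\mathbf{A}\nabla e, \nabla e)_{L^2(\Omega,\mathbb{R}^d)} = (f, e)_{L^2(\Omega)} - (\mathbf{A}\nabla v, \nabla e)_{L^2(\Omega,\mathbb{R}^d)}.
\]

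Next I would introduce the auxiliary flux $\mathbf{y} \in \Hdiv$ via the integration-by-parts identity $(\mathbf{y}, \nabla e)_{L^2} = -(\dvg \mathbf{y}, e)_{L^2}$, which is valid because $e$ has zero trace. Writing $(f,e)_{L^2} = (f + \dvg \mathbf{y}, e)_{L^2} + (\mathbf{y}, \nabla e)_{L^2}$ transforms the identity into
\[
(\mathbf{A}\nabla e, \nabla e)_{L^2(\Omega,\mathbb{R}^d)} = (f + \dvg \mathbf{y}, e)_{L^2(\Omega)} + (\mathbf{y} - \mathbf{A}\nabla v, \nabla e)_{L^2(\Omega,\mathbb{R}^d)},
\]
which cleanly separates the equilibrium residual $f + \dvg \mathbf{y}$ from the constitutive residual $\mathbf{y} - \mathbf{A}\nabla v$.

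For the first term I would apply Cauchy--Schwartz in $L^2(\Omega)$, followed by Friedrichs' inequality (\ref{eq:Fri}) and the coercivity bound (\ref{eq:blow}), producing $\tfrac{C_F}{\sqrt{\blow}} \|\dvg \mathbf{y} + f\|_{L^2} (\mathbf{A}\nabla e, \nabla e)^{1/2}_{L^2}$. The second term is the genuinely nonsymmetric step: the usual device of using a weighted Cauchy--Schwartz with respect to $\mathbf{A}$ fails because $\mathbf{A}$ is not self-adjoint, so $(\mathbf{A}\,\cdot\,,\,\cdot\,)$ is not an inner product. This is exactly where Lemma~\ref{le:nonsym} is used, applied on $\mathcal{U} = L^2(\Omega, \mathbb{R}^d)$ with $A$ the multiplication operator by $\mathbf{A}$, $y := \nabla e$, and $q := \mathbf{y} - \mathbf{A}\nabla v$, yielding the bound $2\,(\mathbf{A}\nabla e, \nabla e)^{1/2}_{L^2} (\mathbf{A}^{-1} \mathbf{B} q, \mathbf{B} q)^{1/2}_{L^2}$.

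Adding the two bounds and dividing by $(\mathbf{A}\nabla e, \nabla e)^{1/2}_{L^2}$ (trivially if it vanishes) yields the asserted majorant. The only real obstacle is the one already dispatched by Lemma~\ref{le:nonsym}: without a Cauchy--Schwartz-type inequality that accommodates the nonsymmetry of $\mathbf{A}$, the cross-term $(\mathbf{y} - \mathbf{A}\nabla v, \nabla e)$ cannot be factorised into a product of the energy norm of the error and a computable term in $\mathbf{y}$ and $v$. The operator $\mathbf{B} = (\mathbf{I} + \mathbf{A}^T \mathbf{A}^{-1})^{-1}$ and the constant $2$ in the final bound are precisely the overhead introduced by this asymmetry; once Lemma~\ref{le:nonsym} is in hand, the remainder of the argument is a direct transposition of the classical derivation of functional majorants for symmetric diffusion.
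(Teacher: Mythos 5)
Your proposal is correct and follows essentially the same route as the paper: the same decomposition into the constitutive residual $\mathbf{y}-\mathbf{A}\nabla v$ and the equilibrium residual $\dvg\mathbf{y}+f$, with Lemma~\ref{le:nonsym} applied on $\mathcal U = L^2(\Omega,\mathbb{R}^d)$ for the first term and H\"older, Friedrichs, and (\ref{eq:blow}) for the second, then cancellation of one factor of the energy norm. The only (immaterial) difference is that you test with $w=u-v$ from the outset, whereas the paper keeps a general test function $w$ and substitutes $w=u-v$ at the end.
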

\begin{proof}
Subtracting $\mathbf{A}\nabla v$ from both sides of (\ref{eq:diff:weak})
and applying the integration by parts formula
\[
( \mathbf{y} , \nabla w)_{L^2(\Omega,\mathbb{R}^d)}
=
(- \dvg \mathbf{y} , w)_{L^2(\Omega)} , \quad
\forall
\mathbf{y} \in \Hdiv, \;
w \in H^1_0(\Omega)
\]
yields
\[
( \mathbf{A} \nabla (u-v), \nabla w )_{L^2(\Omega,\mathbb{R}^d)}
=
( \mathbf{y} - \mathbf{A} \nabla v, \nabla w)_{L^2(\Omega,\mathbb{R}^d)}
+
(\dvg \mathbf{y} + f,w)_{L^2(\Omega)} .
\]
The first term can be estimated from above by (\ref{eq:nonsym}), where
$\mathcal U := L^2(\Omega,\mathbb{R}^d)$ and $A:=\mathbf{A}$. The
second term is estimated from above by Hölder inequality,
(\ref{eq:Fri}), and (\ref{eq:blow}), which leads at
\begin{multline*}
( \mathbf{A} \nabla (u-v), \nabla w )_{L^2(\Omega,\mathbb{R}^d)} \leq \\
2 ( \mathbf{A}^{-1} \mathbf{B} (\mathbf{y} - \mathbf{A} \nabla v),
\mathbf{B} (\mathbf{y} - \mathbf{A} \nabla v) )_{L^2(\Omega,\mathbb{R}^d)}^{1/2}
(\mathbf{A} \nabla w,\nabla w)_{L^2(\Omega,\mathbb{R}^d)}^{1/2} \\
+
\frac{C_F}{\sqrt{\blow}} \| \dvg \mathbf{y} + f \|_{L^2(\Omega)}
(\mathbf{A} \nabla w,\nabla w)_{L^2(\Omega,\mathbb{R}^d)}^{1/2} .
\end{multline*}
Setting $w=u-v$ leads at (\ref{eq:nonsym:maj}).
\end{proof}
\begin{Remark}
Two parts of the majorant are related to the violations of the duality relation and
the equilibrium condition, respectively. They are denoted by
\begin{align*}
\maj_{\rm Dual} & := ( \mathbf{A}^{-1} \mathbf{B} (\mathbf{y} - \mathbf{A} \nabla v),
\mathbf{B} (\mathbf{y} - \mathbf{A} \nabla v) )_{L^2(\Omega,\mathbb{R}^d)}^{1/2}, \\
\maj_{\rm Equi} & := \| \dvg \mathbf{y} + f \| . \\
\end{align*}
\end{Remark}

\section{Global minimization of the error majorant}

Squaring and applying the Young's inequality yields a quadratic form of the majorant, which is more
suitable for the minimization over $\mathbf{y}$.
\begin{Corollary}
Let $v \in H^1_0(\Omega)$ and $u$ be the solution of (\ref{eq:diff:weak}), then,
\[
( \mathbf{A} \nabla (u-v), \nabla (u-v) )_{L^2(\Omega,\mathbb{R}^d)}
\leq
\maj^2(v,\mathbf{y},\beta) , \quad \forall \mathbf{y} \in \Hdiv , \, \beta > 0 ,
\]
where
\begin{multline} \label{eq:nonsym:maj}
\maj^2(v,\mathbf{y},\beta) :=
4 (1+\beta)( \mathbf{A}^{-1} \mathbf{B} (\mathbf{y} - \mathbf{A} \nabla v),
\mathbf{B} (\mathbf{y} - \mathbf{A} \nabla v) )_{L^2(\Omega,\mathbb{R}^d)} \\
+
\frac{1+\beta}{\beta} \frac{C_F^2}{\blow} \| \dvg \mathbf{y} + f \|_{L^2(\Omega)}^2 .
\end{multline}
\end{Corollary}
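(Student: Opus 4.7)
The plan is to start from the bound already established in the Theorem and simply square both sides, then split the cross term with a standard Young inequality parameterized by $\beta$. Since the Theorem gives an $H^1_0$-error bound of the form $\| \nabla(u-v) \|_{\mathbf{A}} \leq a + b$, with
\[
a := 2\,(\mathbf{A}^{-1}\mathbf{B}(\mathbf{y}-\mathbf{A}\nabla v),\mathbf{B}(\mathbf{y}-\mathbf{A}\nabla v))^{1/2}_{L^2(\Omega,\mathbb{R}^d)}, \qquad b := \tfrac{C_F}{\sqrt{\blow}} \| \dvg\mathbf{y} + f \|_{L^2(\Omega)},
\]
squaring yields $(a+b)^2 = a^2 + 2ab + b^2$, and the only real work is to absorb the cross term $2ab$ into $a^2$ and $b^2$ in a way that matches the coefficients $4(1+\beta)$ and $(1+\beta)/\beta$ stated in \eqref{eq:nonsym:maj}.

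The key step will be the application of Young's inequality in the form $2ab \leq \beta\, a^2 + \tfrac{1}{\beta} b^2$ for arbitrary $\beta>0$, which gives
\[
(a+b)^2 \leq (1+\beta)\,a^2 + \tfrac{1+\beta}{\beta}\,b^2 .
\]
Substituting the explicit expressions for $a^2$ and $b^2$ reproduces exactly the right-hand side of \eqref{eq:nonsym:maj}: the factor $4$ in front of the dual term comes from the $2^2$ in $a$, the factor $(1+\beta)$ from Young's inequality, and the residual term picks up the prefactor $(1+\beta)/\beta \cdot C_F^2/\blow$.

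There is no real obstacle here; the statement is essentially a rewriting of the Theorem. The only caveat worth noting is that both sides of the Theorem's inequality are nonnegative, so squaring preserves the inequality without any sign issue, and that the choice of $\beta$ remains free, which is exactly what makes the quadratic form $\maj^2(v,\mathbf{y},\beta)$ convenient for minimization in $\mathbf{y}$ (as needed in the next section): for fixed $\mathbf{y}$ one can optimize $\beta$ analytically, while for fixed $\beta$ the functional is quadratic in $\mathbf{y}$ and therefore amenable to standard Galerkin-type minimization over a finite-dimensional subspace.
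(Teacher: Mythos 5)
Your proposal is correct and matches the paper's (very brief) argument exactly: the Corollary is obtained by squaring the Theorem's bound and applying Young's inequality $2ab \leq \beta a^2 + \tfrac{1}{\beta}b^2$, which yields $(a+b)^2 \leq (1+\beta)a^2 + \tfrac{1+\beta}{\beta}b^2$ with the factor $4$ coming from the $2$ in the dual term of $\maj$. Nothing further is needed.
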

\begin{Corollary}
The minimizers
\begin{align*}
\maj^2(v,\mathbf{\hat y},\beta) & = \min\limits_{\mathbf{y} \in \Hdiv}
\maj^2(v,\mathbf{y},\beta) \\
\maj^2(v,\mathbf{y},\hat \beta) & = \min\limits_{\beta > 0}
\maj^2(v,\mathbf{y},\beta)
\end{align*}
satisfy
\begin{multline} \label{eq:hatydef}
\frac{C_F^2}{\blow} (\dvg \mathbf{\hat y} , \dvg \mathbf{q})_{L^2(\Omega)}
+
2 \beta \left(
(\mathbf{A}^{-1} \mathbf{B} \mathbf{q}, \mathbf{B} \mathbf{\hat y})_{L^2(\Omega,\mathbb{R}^d)}
+
(\mathbf{A}^{-1} \mathbf{B} \mathbf{\hat y}, \mathbf{B} \mathbf{q})_{L^2(\Omega,\mathbb{R}^d)}
\right) \\
=
-\frac{C_F^2}{\blow} (f , \dvg \mathbf{q})_{L^2(\Omega)}
+
2 \beta \left(
(\mathbf{A}^{-1} \mathbf{B} \mathbf{q}, \mathbf{B} \mathbf{A} \nabla v)_{L^2(\Omega,\mathbb{R}^d)}
+
(\mathbf{A}^{-1} \mathbf{B} \mathbf{A} \nabla v, \mathbf{B} \mathbf{q})_{L^2(\Omega,\mathbb{R}^d)}
\right), \\
\quad \forall \mathbf{q} \in \Hdiv
\end{multline}
and
\begin{equation} \label{eq:hatbetadef}
\hat \beta = \frac{
\frac{C_F}{\sqrt{\blow}} \| \dvg \mathbf{y} + f \|_{L^2(\Omega)}
}
{
( \mathbf{A}^{-1} \mathbf{B} (\mathbf{y} - \mathbf{A} \nabla v),
\mathbf{B} (\mathbf{y} - \mathbf{A} \nabla v) )_{L^2(\Omega,\mathbb{R}^d)}^{1/2} ,
}
\end{equation}
respectively.
\end{Corollary}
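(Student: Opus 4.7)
The plan is to derive each identity via the standard first-order optimality argument: $\maj^2(v,\mathbf{y},\beta)$ is a convex quadratic functional of $\mathbf{y}$ for fixed $\beta$, and a smooth strictly convex function of $\beta>0$ for fixed $\mathbf{y}$, so in each case the minimizer is characterized by vanishing of the appropriate derivative.

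For $\hat{\mathbf{y}}$, I would first note convexity and coercivity of the quadratic form $\mathbf{y}\mapsto\maj^2(v,\mathbf{y},\beta)$ on $\Hdiv$: the dual term is nonnegative because $\mathbf{A}^{-1}$ is positive definite on $L^2(\Omega,\mathbb{R}^d)$, and the equilibrium term controls $\|\dvg\mathbf{y}\|_{L^2(\Omega)}$; existence of a minimizer then follows from standard Hilbert-space arguments. The Euler-Lagrange condition is $\tfrac{d}{dt}\maj^2(v,\hat{\mathbf{y}}+t\mathbf{q},\beta)\big|_{t=0}=0$ for every $\mathbf{q}\in\Hdiv$. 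Differentiating the two terms of $\maj^2$ at $t=0$ produces, from the dual part, the symmetrised combination $(\mathbf{A}^{-1}\mathbf{B}\mathbf{q},\mathbf{B}(\hat{\mathbf{y}}-\mathbf{A}\nabla v))_{L^2(\Omega,\mathbb{R}^d)}+(\mathbf{A}^{-1}\mathbf{B}(\hat{\mathbf{y}}-\mathbf{A}\nabla v),\mathbf{B}\mathbf{q})_{L^2(\Omega,\mathbb{R}^d)}$ multiplied by $4(1+\beta)$, and from the equilibrium part the usual $2(\dvg\hat{\mathbf{y}}+f,\dvg\mathbf{q})_{L^2(\Omega)}$ multiplied by $\tfrac{1+\beta}{\beta}\tfrac{C_F^2}{\blow}$. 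Dividing by $2(1+\beta)$, multiplying through by $\beta$, and separating the terms depending on $\hat{\mathbf{y}}$ from those depending on $v$ and $f$ yields (\ref{eq:hatydef}) line-by-line.

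For $\hat{\beta}$, I would fix $\mathbf{y}$ and abbreviate $a:=\maj_{\rm Dual}^2$ and $b:=\tfrac{C_F^2}{\blow}\maj_{\rm Equi}^2$, so that $\maj^2(v,\mathbf{y},\beta)=4(1+\beta)\,a+\tfrac{1+\beta}{\beta}\,b=4a+4\beta a+b+b/\beta$. This is smooth on $\beta>0$, strictly convex since $\tfrac{d^2}{d\beta^2}(b/\beta)=2b/\beta^3>0$, and tends to $+\infty$ as $\beta\to 0^{+}$ and as $\beta\to\infty$, hence admits a unique positive minimizer. Setting the derivative $4a-b/\beta^2$ to zero solves for $\hat{\beta}$ in closed form as a ratio of the two norm-like quantities $\sqrt{b}$ and $\sqrt{a}$, giving (\ref{eq:hatbetadef}) after identification.

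The main obstacle is not really an obstacle, only careful bookkeeping. The one point that needs attention in the $\mathbf{y}$-minimization is preserving both pairings $(\mathbf{A}^{-1}\mathbf{B}\mathbf{q},\mathbf{B}\hat{\mathbf{y}})_{L^2(\Omega,\mathbb{R}^d)}$ and $(\mathbf{A}^{-1}\mathbf{B}\hat{\mathbf{y}},\mathbf{B}\mathbf{q})_{L^2(\Omega,\mathbb{R}^d)}$ as genuinely distinct contributions, since $\mathbf{A}^{-1}\mathbf{B}$ is not self-adjoint under the nonsymmetric hypothesis on $\mathbf{A}$; collapsing them would destroy exactly the extra symmetrised structure that distinguishes (\ref{eq:hatydef}) from its symmetric-case counterpart.
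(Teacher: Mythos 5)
Your approach coincides with the paper's: its proof consists exactly of the two stationarity conditions you use, namely $\tfrac{\rm d}{{\rm d}t}\maj^2(v,\hat{\mathbf y}+t\mathbf q,\beta)\big|_{t=0}=0$ for all $\mathbf q\in\Hdiv$ and $\tfrac{\rm d}{{\rm d}\beta}\maj^2(v,\mathbf y,\hat\beta)=0$, and your expansion of the Gateaux derivative of the $\mathbf y$-part, keeping the two pairings $(\mathbf A^{-1}\mathbf B\mathbf q,\mathbf B\hat{\mathbf y})$ and $(\mathbf A^{-1}\mathbf B\hat{\mathbf y},\mathbf B\mathbf q)$ separate, reproduces (\ref{eq:hatydef}) correctly after the rescaling by $\beta/\bigl(2(1+\beta)\bigr)$.

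One point must not be glossed over, and it concerns the $\beta$-minimization. Carrying your own computation to the end, with $a=\maj_{\rm Dual}^2$ and $b=\tfrac{C_F^2}{\blow}\maj_{\rm Equi}^2$ the condition $4a-b/\beta^2=0$ gives $\hat\beta=\sqrt b/(2\sqrt a)=\tfrac{C_F\,\maj_{\rm Equi}}{2\sqrt{\blow}\,\maj_{\rm Dual}}$, i.e.\ with a factor $2$ in the denominator coming from the coefficient $4(1+\beta)$ in front of the dual term (equivalently, from the $2\maj_{\rm Dual}$ in the unsquared majorant). This does \emph{not} coincide with (\ref{eq:hatbetadef}) as printed, which lacks the $2$; note that the update $\beta_{k+1}=\tfrac{C_F\maj^{\rm Equi}_{k+1}}{2\sqrt{\blow}\maj^{\rm Dual}_{k+1}}$ in Algorithm \ref{alg:maj} does contain the $2$ and agrees with your calculation. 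So your derivation is the correct one and the displayed formula (\ref{eq:hatbetadef}) appears to have dropped the factor, but your phrase ``giving (\ref{eq:hatbetadef}) after identification'' silently absorbs this discrepancy; you should write out the closed form $\hat\beta=\sqrt b/(2\sqrt a)$ explicitly and state that it differs from the printed equation by that factor rather than claim an exact match.
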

\begin{proof}
The functional $\maj^2(v,\mathbf{y},\beta)$ is quadratic and convex w.r.t. $\mathbf{y}$.
Thus the necessary and sufficient condition for the minimizer $\mathbf{\hat y}$ is
\[
\frac{{\rm d}}{{\rm d} t} \maj^2(v,\mathbf{\hat y}+t \mathbf{q}, \beta)\Big|_{t=0} = 0 ,
\quad \forall \mathbf{q} \in \Hdiv ,
\]
which leads to (\ref{eq:hatydef}). Similarly,
\[
\frac{{\rm d}}{{\rm d} \beta} \maj^2(v,\mathbf{y},\hat \beta) = 0
\]
yields (\ref{eq:hatbetadef}).
\end{proof}
\begin{Remark}
If $\mathbf{A}$ is symmetric, then (\ref{eq:hatydef}) reduces to
\begin{multline*} 
C_F^2 \IntO \dvg \mathbf{\hat y} \dvg \mathbf{q} \dx
+ \beta \IntO \mathbf{A}^{-1} \mathbf{\hat y} \cdot \mathbf{q} \dx \\
=
-C_F^2 \IntO f \dvg \mathbf{q} \dx
+ \beta \IntO \nabla v \cdot \mathbf{q} \dx
\quad \forall \mathbf{q} \in \Hdiv .
\end{multline*}
\end{Remark}

There are many alternatives how to compute the value of the majorant
(see, e.g., \cite[Chap. 3]{MaliNeittaanmakiRepin2014}). Here, the the global
minimization of the majorant over
finite dimensional subspace is presented. The minimization is done iteratively by solving
(\ref{eq:hatydef}) and (\ref{eq:hatbetadef}) subsequently.

Let
$\mathbf{y} = \sum_{j=1}^N c_j \boldsymbol\phi_j$ and
$Q_h := {\rm span}(\boldsymbol\phi_1,\dots,\boldsymbol\phi_N) \subset \Hdiv$, i.e.,
$\boldsymbol\phi_j$ ($j \in \{1,\dots,N\} $) are the global basis functions.
Then (\ref{eq:hatydef}) leads to a system of linear equations
\begin{equation} \label{eq:ymin2}
\left( \frac{C_F^2}{\sqrt{\blow}} \mathbf{S}  + 2 \beta \mathbf{M} \right) \mathbf{c} =
-\frac{C_F^2}{\sqrt{\blow}} \mathbf{b} + 2 \beta \mathbf{z} ,
\end{equation}
where
\begin{align}
\label{eq:matdef1}
& S_{ij} := ( \dvg \boldsymbol{\phi}_j , \dvg \boldsymbol{\phi}_i )_{L^2(\Omega)}, \\
\label{eq:matdef2}
& M_{ij} :=
(\mathbf{A}^{-1} \mathbf{B} \boldsymbol{\phi}_j, \mathbf{B} \boldsymbol{\phi}_i)_{L^2(\Omega,\mathbb{R}^d)}
+
(\mathbf{A}^{-1} \mathbf{B} \boldsymbol{\phi}_i, \mathbf{B} \boldsymbol{\phi}_j)_{L^2(\Omega,\mathbb{R}^d)}
 , \\
\label{eq:matdef3}
& b_{i} := ( f , \dvg \boldsymbol{\phi}_i )_{L^2(\Omega)} \\
\label{eq:matdef4}
& z_{i} := (\mathbf{A}^{-1} \mathbf{B} \boldsymbol{\phi}_i, \mathbf{B} \mathbf{A} \nabla v)_{L^2(\Omega,\mathbb{R}^d)}
+
(\mathbf{A}^{-1} \mathbf{B} \mathbf{A} \nabla v, \mathbf{B} \boldsymbol{\phi}_i)_{L^2(\Omega,\mathbb{R}^d)} ,
\end{align}
and $\mathbf{c} \in \mathbb{R}^N$ is the (column) vector of unknown coefficients.
The natural choice is to generate $Q_h$ using Raviart-Thomas -elements (see
\cite{RaviartThomas1977}).
The global minimization procedure for $\maj^2$ is described in Algorithm \ref{alg:maj}.
\begin{algorithm}[t!]
\caption{Computation of the majorant for
the problem (\ref{eq:diff:strong1})-(\ref{eq:diff:strong2})}
\label{alg:maj}
\begin{algorithmic}
\STATE {\bf Input:} $v$ \COMMENT{approximate solution},
$\mathbf{A}$, \COMMENT{diffusion coefficient matrix}
$f$, \COMMENT{RHS of the problem},
$C_F$, \COMMENT{Constant in (\ref{eq:Fri})},
$\blow$, \COMMENT{Constant in (\ref{eq:blow})},
$I_{\max}$ \COMMENT{maximum number
of iterations}, $\epsilon$ \COMMENT{stopping criteria for $\maj$}
\STATE{}
\STATE Generate $\mathbf{S}$, $\mathbf{M}$, $\mathbf{b}$, and $\mathbf{z}$ in
(\ref{eq:matdef1})-(\ref{eq:matdef4}).
\STATE Compute norms $\| f \|$ and $\|\nabla v \|$.
\STATE Set $\beta_1 := 1$, $\maj_{k}=\infty$ and $k=0$. \COMMENT{initialize parameters}
\WHILE {$k < I_{\max}$ {\bf and} $\frac{\maj_{k+1} - \maj_{k}}{\maj_{k}} > \epsilon$}
    \STATE $k=k+1$
    \STATE Solve $\mathbf{c}_{k+1}$ from
    $
        \left( C_F^2 \mathbf{S}  + 2 \beta_k \mathbf{M} \right) \mathbf{c}_{k+1} =
        -C_F^2 \mathbf{b} + 2 \beta_k \mathbf{z} .
    $
    \STATE
    $
        \maj^{\rm Equi}_{k+1} = \sqrt{
        \mathbf{c}_{k+1}^T \mathbf{S} \mathbf{c}_{k+1}
        + 2 \mathbf{c}_{k+1}^T \mathbf{b} + \|f\|^2 }
    $
    \STATE
    $
        \maj^{\rm Dual}_{k+1} = \sqrt{
        \mathbf{c}_{k+1}^T \mathbf{M} \mathbf{c}_{k+1}
        - 2 \mathbf{c}_{k+1}^T \mathbf{z} + \|\nabla v \|^2 }
    $
%
    \STATE
    $
        \beta_{k+1} = \frac{C_F \maj^{\rm Equi}_{k+1}}{2 \sqrt{\blow} \maj^{\rm Dual}_{k+1}}
    $
    \STATE
    $
    \maj_{k+1} = 2 \maj^{\rm Dual}_{k+1} + \frac{C_F}{\sqrt{\blow}} \maj^{\rm Equi}_{k+1}
    $
\ENDWHILE
\STATE
$
\mathbf{y} = \sum_{j=1}^N {c_k}_j \boldsymbol{\phi}_j
$
\STATE{}
\STATE {\bf Output:} $\maj_{k+1}$ \COMMENT{Upper bound for the approximation error},
$\mathbf{y}$ \COMMENT{Approximation of the flux}
\end{algorithmic}
\end{algorithm}
\begin{Remark}
Note that in Algorithm \ref{alg:maj}, the global matrices $\mathbf{S}$ and $\mathbf{M}$
have to be assembled only once.
The coefficient matrix in (\ref{eq:ymin2}) is symmetric regardless of the fact that
$\mathbf{A}$ is not.
\end{Remark}

\section{Numerical tests}

Algorithm \ref{alg:maj} is very convenient to implement using any
finite element software, e.g., FEniCS \cite{LoggMardalEtAl2012a}
and FREEFEM++ \cite{Hecht2012}), which allows user to define
problems using weak forms. This is true for all estimates of the
functional type presented in
\cite{NeittaanmakiRepin2004,Repin2008,MaliNeittaanmakiRepin2014}.
The following tests are computed using FEniCS finite element
package. Here, we apply Algorithm \ref{alg:maj} to estimate the
error of a finite element approximation for a test example, where
the exact solution is known.
\begin{Example} \label{ex:1}
Let $\Omega = (0,1)\times(0,1)$, 
$u_g = 0$,
$
\mathbf{A} = \left( \begin{smallmatrix} a & b \\ c & d \end{smallmatrix} \right)
$, \linebreak
$u(x_1,x_2)=\sin(k_1 \pi x_1) \sin(k_2 \pi x_2)$, and
\begin{multline*}
f(x_1,x_2) = \pi^2 \left(
(a+d) k_1^2 \sin(k_1 \pi x_1) \sin(k_2 \pi x_2) \right. \\
\left.
- (b+c) k_1 k_2 \cos(k_1 \pi x_1) \cos(k_2 \pi x_2)
\right) .
\end{multline*}
Select
$
\mathbf{A} = \left( \begin{smallmatrix} 2 & 1 \\ 0 & 3 \end{smallmatrix} \right)
$,
then $\blow = 2$,
$
\mathbf{A}^{-1} =
\tfrac{1}{6}
\left( \begin{smallmatrix}
3 & -1 \\ 0 & 2
\end{smallmatrix} \right)
$
and
$
\mathbf{B} =
\tfrac{1}{23}
\left( \begin{smallmatrix}
11 & 2 \\ -3 & 12
\end{smallmatrix} \right) .
$
\end{Example}

The approximate solution $v \in V_h$ of Example \ref{ex:1}
is computed on a mesh $\mathcal T_h$, using triangular Courant elements of the
order $p_1$. The space $Q_h$ is generated using the Raviart-Thomas elements of order $p_2$
on the same mesh. The amount of global degrees of freedom are denoted
by $N_1=\textrm{dim}(V_h)$ and $N_2=\textrm{dim}(Q_h)$
The efficiency index of the majorant is
\begin{equation} \label{eq:Ieff:def}
I_{\rm eff} := \frac{\maj^2(v,\mathbf{y},\beta)}
{(\mathbf{A}\nabla(u-v),\nabla(u-v))_{L^2(\Omega,\mathbb{R}^d)}}
\end{equation}
The majorant is computed for different meshes with $k_1=1$, $k_2=1$, and $p_1=1$
in Table \ref{tab:ex1}.
\begin{table}
\begin{center}
\caption{Example \ref{ex:1}: $k_1=1$, $k_2=1$, and $p_1=1$}
\label{tab:ex1}
\begin{tabular}{@{}lllllllllll@{}}
\hline
$N_1$ & $p_2$ & $N_2$ & $k$ & $\maj^2(v,\mathbf{y}_k,\beta_k)$
& $\maj^{\rm Dual}_k$ & $\maj^{\rm Equi}_k$ & $I_{\rm eff}$ \\
\hline
441 & 1 & 1240 & 3 &   1.76E+00 &   2.46E-02 &   2.06E+00 & 6.6480 \\
441 & 2 & 4080 & 3 &   3.15E-01 &   1.78E-02 &   2.09E-03 & 1.1858 \\
441 & 3 & 8520 & 4 &   2.68E-01 &   1.78E-02 &   1.07E-06 & 1.0090 \\
1681 & 1 & 4880 & 3 &   8.85E-01 &   6.23E-03 &   5.17E-01 & 6.6452 \\
1681 & 2 & 16160 & 3 &   1.45E-01 &   4.44E-03 &   1.31E-04 & 1.0920 \\
1681 & 3 & 33840 & 4 &   1.33E-01 &   4.44E-03 &   1.68E-08 & 1.0023 \\
6561 & 1 & 19360 & 2 &   4.43E-01 &   1.56E-03 &   1.29E-01 & 6.6445 \\
6561 & 2 & 64320 & 3 &   6.97E-02 &   1.11E-03 &   8.20E-06 & 1.0458 \\
6561 & 3 & 134880 & 3 &   6.66E-02 &   1.11E-03 &   2.62E-10 & 1.0006 \\
14641 & 1 & 43440 & 2 &   2.95E-01 &   6.95E-04 &   5.75E-02 & 6.6443 \\
14641 & 2 & 144480 & 3 &   4.58E-02 &   4.93E-04 &   1.62E-06 & 1.0305 \\
14641 & 3 & 303120 & 3 &   4.44E-02 &   4.93E-04 &   2.30E-11 & 1.0003 \\
40401 & 1 & 120400 & 2 &   1.77E-01 &   2.50E-04 &   2.07E-02 & 6.6443 \\
40401 & 2 & 400800 & 3 &   2.71E-02 &   1.78E-04 &   2.10E-07 & 1.0183 \\
40401 & 3 & 841200 & 3 &   2.67E-02 &   1.78E-04 &   1.07E-12 & 1.0002  \\
\hline
\end{tabular}
\end{center}
\end{table}
The efficiency of the majorant and the number of iterations
(in Algorithm \ref{alg:maj} $\varepsilon = 10^{-6}$) do
not depend on the mesh size. For $p_2=2$ and $p_3$,
$Q_h$ can practically present the exact flux, since the efficiency index is almost one.
Note that in this case $\maj^{\rm Dual}$ is almost the exact error and $\maj^{\rm Equi}$
vanishes. Results of a similar experiment in the case $k_1=2$, $k_2=3$, and $p_1=2$ are
depicted in Table \ref{tab:ex2}.
\begin{table}
\begin{center}
\caption{Example \ref{ex:1}: $k_1=2$, $k_2=3$, and $p_1=2$}
\label{tab:ex2}
\begin{tabular}{@{}lllllllllll@{}}
\hline
$N_1$ & $p_2$ & $N_2$ & $k$ & $\maj^2(v,\mathbf{y}_k,\beta_k)$
& $\maj^{\rm Dual}_k$ & $\maj^{\rm Equi}_k$ & $I_{\rm eff}$ \\
\hline
1681 & 1 & 1240 & 3 &   2.60E+01 &   3.94E-01 &   6.10E+02 & 189.9638 \\
1681 & 2 & 4080 & 3 &   2.15E+00 &   6.05E-03 &   3.92E+00 & 15.6634 \\
1681 & 3 & 8520 & 2 &   2.53E-01 &   4.71E-03 &   1.26E-02 & 1.8496 \\
6561 & 1 & 4880 & 3 &   1.32E+01 &   9.51E-02 &   1.56E+02 & 380.2599 \\
6561 & 2 & 16160 & 3 &   5.41E-01 &   3.89E-04 &   2.49E-01 & 15.6199 \\
6561 & 3 & 33840 & 3 &   4.93E-02 &   3.00E-04 &   1.99E-04 & 1.4258 \\
25921 & 1 & 19360 & 3 &   6.60E+00 &   2.36E-02 &   3.94E+01 & 760.6287 \\
25921 & 2 & 64320 & 2 &   1.35E-01 &   2.45E-05 &   1.56E-02 & 15.6082 \\
25921 & 3 & 134880 & 3 &   1.05E-02 &   1.88E-05 &   3.12E-06 & 1.2139 \\
58081 & 1 & 43440 & 3 &   4.40E+00 &   1.05E-02 &   1.75E+01 & 1140.9677 \\
58081 & 2 & 144480 & 2 &   6.02E-02 &   4.84E-06 &   3.09E-03 & 15.6060 \\
58081 & 3 & 303120 & 2 &   4.41E-03 &   3.72E-06 &   2.74E-07 & 1.1430 \\
\hline
\end{tabular}
\end{center}
\end{table}
It is easy to see that lowest order Raviart-Thomas elements are not able to present
the flux properly and in the case $p_2=1$, the efficiency index of the majorant is poor.
Again, in the $p$-refined spaces the estimate improves significantly.

\begin{Example} \label{ex:2}
Let $\Omega:=(0,1)\times(0,1)\times(0,1)$, $f(x_1,x_2,x_3)=x_1 x_2 x_3$, \linebreak and
\[
\mathbf{A} = \left( \begin{smallmatrix} 1000 & 20 & -500 \\ -3 & 30 & 16 \\ 2 0 3 \end{smallmatrix} \right) .
\]
Then,
\[
\mathbf{A}^{-1} \approx \left( \begin{smallmatrix}
 7.4490978E-04 & -4.9660652E-04 &  1.2680020E-01 \\
 3.3934779E-04 &  3.3107104E-02 & -1.2001324E-01 \\
-4.9660652E-04 &  3.3107101E-04 &  2.4879987E-01
 \end{smallmatrix} \right)
\]
and
\[
\mathbf{B} \approx \left( \begin{smallmatrix}
 1.0126139 & -0.4980245 &  2.0416897 \\
-0.0160603 & 0.5154516  & -0.0408795 \\
-0.0060666 & 0.009230   & -0.0280656
 \end{smallmatrix} \right)
\] .
\end{Example}
In Example \ref{ex:2}, the exact solution is not known. Instead a reference solution was
computed
using third order Courant type elements with 29791 global degrees of freedom is applied.
The approximations were computed using linear tetrahedral Courant type elements and the
fluxes are generated using tetrahedral Raviart-Thomas elements of order $p_2$. The
results were depicted on Table \ref{tab:ex3} and they show similar characteristics as in
the two dimensional example.
\begin{table}
\begin{center}
\caption{Example \ref{ex:2}, $p_1=1$}
\label{tab:ex3}
\begin{tabular}{@{}lllllllllll@{}}
\hline
$N_1$ & $p_2$ & $N_2$ & $k$ & $\maj^2(v,\mathbf{y}_k,\beta_k)$
& $\maj^{\rm Dual}_k$ & $\maj^{\rm Equi}_k$ & $I_{\rm eff}$ \\
\hline
125 & 1 & 864 & 4 &   4.67E-02 &   1.15E-05 &   1.57E-03 & 10.0122 \\
125 & 2 & 3744 & 3 &   8.47E-03 &   6.99E-06 &   9.43E-06 & 1.8164 \\
125 & 3 & 9792 & 3 &   5.26E-03 &   6.68E-06 &   8.94E-09 & 1.1284 \\
343 & 1 & 2808 & 3 &   3.12E-02 &   5.81E-06 &   6.85E-04 & 9.2258 \\
343 & 2 & 12312 & 3 &   5.24E-03 &   3.65E-06 &   1.86E-06 & 1.5489 \\
343 & 3 & 32400 & 3 &   3.81E-03 &   3.65E-06 &   7.85E-10 & 1.1241 \\
729 & 1 & 6528 & 3 &   2.35E-02 &   3.48E-06 &   3.83E-04 & 9.1082 \\
729 & 2 & 28800 & 3 &   3.78E-03 &   2.21E-06 &   5.88E-07 & 1.4642 \\
729 & 3 & 76032 & 3 &   2.97E-03 &   2.64E-06 &   1.40E-10 & 1.1527 \\
1331 & 1 & 12600 & 3 &   1.88E-02 &   2.31E-06 &   2.44E-04 & 7.8120 \\
1331 & 2 & 55800 & 3 &   2.94E-03 &   1.47E-06 &   2.41E-07 & 1.2208 \\
\hline
\end{tabular}
\end{center}
\end{table}

\section{Summary}

An upper functional deviation estimate (majorant) for nonsymmetric stationary diffusion problem is derived.
An algorithm for the global minimization of the majorant over a finite dimensional subspace
is presented and tested.
The efficiency of the majorant depends on the particular problem (i.e., the exact solution)
and the relation of spaces $V_h$ and $Q_h$. The question is that how accurately $V_h$ can
represent $u$ (in the energy norm) in comparison with the ability of $Q_h$ to represent
$A\nabla u$ (in the $\Hdiv$-norm). If $Q_h$ is
``better'', then the estimate is very accurate and the other way round. The crude overestimation
in Table \ref{tab:ex2} shows that using a ``worse'' space for the computation of fluxes is
not generally a good idea.

\pagebreak


\def\cprime{$'$}

\end{document}